\newcommand{\RR}{\mathbf{R}}
\newcommand{\teta}{\vartheta}
\newcommand{\ellal}{\ell^\infty_\mathscr{A}}
\newcommand{\Aut}{\mathrm{Aut}}
\newcommand{\path}{P}
\newcommand{\pathmod}{\RR_\mathrm{alt}[P]}
\newcommand{\pathban}{\mathscr{P}}
\newcommand{\pathnorm}[1]{\|#1\|_\mathrm{path}}
\newcommand{\coh}[1]{V_\mathrm{coh}^{#1}}
\newcommand{\hbc}{\mathrm{H}_\mathrm{cb}}
\newcommand{\hb}{\mathrm{H}_\mathrm{b}}
\newcommand{\se}{\subseteq}
\newcommand{\ol}{\overline}
\newcommand{\potimes}{\mathbin{\widehat\otimes}}
\newcommand{\cp}{\mathbin{\smallsmile}}
\newcommand{\inv}{^{-1}}
\newcommand{\car}{\sqsubset}
\theoremstyle{plain}
\newtheorem{thm}{Theorem}
\newtheorem{lem}[thm]{Lemma}
\newtheorem{prop}[thm]{Proposition}
\newtheorem{cor}[thm]{Corollary}
\begin{document}

\title[The cup product of quasimorphisms]{The cup product of Brooks quasimorphisms}

\begin{abstract}
We prove the vanishing of the cup product of the bounded cohomology classes associated to any two Brooks quasimorphisms on the free group. This is a consequence of the vanishing of the square of a universal class for tree automorphism groups.
\end{abstract}

%\author[M. Bucher and N. Monod]{Michelle Bucher and Nicolas Monod}
\author[M. Bucher]{Michelle Bucher}
\address{University of Geneva, 1211 Geneva 4, Switzerland}
\email{michelle.bucher-karlsson@unige.ch}
\author[N. Monod]{Nicolas Monod}
\address{EPFL, SB-MATH-EGG, 1015 Lausanne, Switzerland}
\email{nicolas.monod@epfl.ch}
\maketitle

\section{Introduction}%%%%%%%%%%%%%%%%%%%%%%%%%%%%%%%%%%%%%%%%%%%%%%%%%%%%%%%%%%%%%%%%%%%%%%%%%%%%%%%%%%%%%%%%%%%%%
%%%%%%%%%%%%%%%%%%%%%%%%%%%%%%%%%%%%%%%%%%%%%%%%%%%%%%%%%%%%%%%%%%%%%%%%%%%%%%%%%%%%%%%%%%%%%
Although bounded cohomology found a great variety of applications, it remains so mysterious that even for a (non-abelian) free group $F$ of finite rank, we do not know much about it. 

More precisely, beyond the trivial case of $\hb^1(F, \RR)=0$, it is known that both $\hb^2(F,\RR)$ and $\hb^3(F,\RR)$ are infinite-dimensional. However, $\hb^{n\geq 4}(F,\RR)$ remains completely unknown; in particular, we do not know whether $\hb^4(F,\RR)$ vanishes or not.

\medskip

The first infinite family of non-trivial classes in $\hb^2(F,\RR)$ are provided by \textbf{Brooks quasimorphisms}~\cite{Brooks} (anticipated by Johnson~\cite[2.8]{Johnson} and Rhemtulla~\cite{Rhemtulla68}); we recall their definition. Pick any reduced word $w$ in a choice of free generators for $F$ and consider the counting function $f_w\colon F\rightarrow \RR$ defined on $g\in F$ by
$$f_w(g)=\sharp\{\text{occurrences of $w$ in $g$}\} - \sharp\{\text{occurrences of $w$ in $g\inv$}\}.$$
If $w$ is reduced to one letter (or trivial), then $f_w$ is a homomorphism. In all other cases, $f_w$ is a quasimorphism and defines a non-trivial class $\beta_w\in \hb^2(F,\RR)$ unless $w$ is conjugated to a power of a letter. The space spanned by all these $\beta_w$ is infinite-dimensional~\cite{Brooks}\cite{Mitsumatsu} and is dense in $\hb^2(F,\RR)$ for a suitable topology of pointwise convergence~\cite[5.7]{Grigorchuk95}. (Following Brooks, we allow overlaps when counting occurrences, whilst other authors do not; see~\cite[p.~251]{Hartnick-Schweitzer} for the density in our setting.)

\smallskip

The aim of this note is to show that the cup product of any two elements in this dense sub-space vanishes in $\hb^4(F,\RR)$.

\begin{thm}\label{thm:cupOnF} 
Let $\beta_w,\beta_{w'}\in \hb^2(F,\RR)$ be the bounded cohomology classes associated to two Brooks quasimorphisms on $F$.

Then $\beta_w\cp \beta_{w'}=0$ in $\hb^4(F,\RR)$.
\end{thm}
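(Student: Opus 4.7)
The strategy announced in the abstract is to realize the Brooks classes $\beta_w$ as specializations of a single universal class on the tree automorphism group and then prove that its square vanishes. Let $T$ denote the Cayley tree of $F$ with respect to the chosen free generators; it is a regular tree on which $F$ acts freely by covering transformations, and the inclusion $F\hookrightarrow \Aut(T)$ induces a cup-product compatible restriction map in bounded cohomology. The plan is to work at the level of $\Aut(T)$ with coefficients in a Banach module $V$ of alternating path-functions that simultaneously carries every Brooks class, and to deduce the theorem from a universal statement there.

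To build the universal class, fix a basepoint $o\in T$ and encode each reduced word $w\in F$ as the finite oriented reduced path $p_w$ that it traces out starting from $o$. Take $V$ to be a suitable Banach space built from bounded alternating functions on the set of finite reduced paths in $T$, with its natural isometric $\Aut(T)$-action. The signed path counts underlying the Brooks construction assemble into a bounded $\Aut(T)$-equivariant $2$-cocycle $\Omega$ on $\Aut(T)^3$ with values in $V$, whose value at $(g_0,g_1,g_2)$ records, for each path $p$, the signed number of occurrences of $p$ inside the tripod spanned by $g_0 o, g_1 o, g_2 o$. Evaluating $[\Omega]\in \hb^2(\Aut(T), V)$ against $p_w$ and restricting to $F$ recovers $\beta_w$; by naturality and bilinearity of the cup product, Theorem~\ref{thm:cupOnF} is then reduced to the universal statement $[\Omega]\cp[\Omega]=0$ in $\hb^4(\Aut(T), V\potimes V)$.

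The central remaining step is to exhibit a bounded $\Aut(T)$-equivariant $3$-cochain $\eta$ on $\Aut(T)^4$ with values in $V\potimes V$ satisfying $d\eta=\Omega\cp\Omega$. The geometric input is that any four points in $T$ span a canonical finite subtree with at most two branch vertices; decomposing the cup-square along this Steiner structure should split $\Omega\cp\Omega$ into a piece symmetric under exchanging the two tensor factors, which dies after antisymmetrisation inside $V\potimes V$, and a residual piece that is explicitly a coboundary. The main obstacle I anticipate is the boundedness of $\eta$: individual products of Brooks-style signed path counts are severely unbounded on $\Aut(T)^4$, so the required cancellations must be truly uniform in $(g_0,g_1,g_2,g_3)$. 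Nailing down this four-point identity, presumably via a finite case analysis on the combinatorial type of the four-point span in $T$ together with the alternation relation on pairs of paths, is the real work of the proof; once in hand, naturality, bilinearity and evaluation at $(p_w,p_{w'})$ complete the proof of Theorem~\ref{thm:cupOnF}.
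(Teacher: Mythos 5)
Your reduction of the theorem to the vanishing of the square of a universal class in $\hbc^4\big(\Aut(T), V\potimes V\big)$ matches the paper's strategy (its Proposition~\ref{prop:mother} and Corollary~\ref{cor:mother}), and that part is essentially right, modulo the choice of coefficient module: the paper uses the free vector space on paths with an $\ell^1$-type norm, paired against the bounded functionals $\lambda_w$, rather than a space of bounded functions on paths. But the heart of the proof --- producing a bounded equivariant primitive for $\Omega\cp\Omega$ --- is exactly the step you leave open, and the mechanism you sketch for it would not work as stated. A summand of $\Omega\cp\Omega$ that is symmetric under exchanging the two tensor factors does not ``die after antisymmetrisation'': the coefficient module is the full projective tensor product, not its exterior square, and the paper performs no symmetrisation in the tensor factors at all. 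The symmetry it does exploit lives on the \emph{vertex tuples}, via the involution reversing the order of a tuple.

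The missing idea is the aligned/coherent resolution. The paper shows (Proposition~\ref{prop:comp}, relying on its earlier work on aligned chains) that $\hbc^*(\Aut(T),E)$ can be computed on the subcomplex of \emph{coherent} tuples, i.e.\ distinct vertices in increasing order along a geodesic. On such tuples the combinatorics collapses: the explicit primitive $B(x)(p_1,p_2)=\pm1$ (both paths carried by $[x_0,x_3]$, disjoint interiors, $x_i\in\mathrm{Int}(p_i)$ for $i=1,2$) is easy to write down, its boundedness is a two-line estimate, and the identity $dB=\omega\cp\omega$ is a three-case check on aligned configurations. Your plan to work with arbitrary $4$-tuples via their Steiner span would force you to handle tripods and $H$-shaped spans, where no comparably simple primitive is apparent; and note that the paper's identity $dB=\omega\cp\omega$ is only asserted, and only needed, on coherent tuples --- which is precisely why the resolution result is indispensable. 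Without constructing such a $B$ and proving both its boundedness and the coboundary identity, the proof is incomplete. (A small correction: the cup-square itself is uniformly bounded on all tuples, since the cocycle $\omega=d\teta$ is bounded even though $\teta$ is not; the genuine issue is that the primitive one finds must be bounded, not that unboundedness of the cocycle needs to be cancelled.)
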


We were informed by N.~Heuer that he independently obtained a similar result~\cite{Heuer_arx} by methods completely different from ours.

\medskip

We can give a rather transparent proof of Theorem~\ref{thm:cupOnF} by realizing bounded cohomology with the \emph{aligned chains} that we introduced in~\cite{Bucher-Monod_tree_inpress}. This simplifies the combinatorics and allows us to exhibit a natural explicit coboundary for the cup product.

Moreover, we can carry out this task at once for all $w, w'$ simultaneously ---~by working instead with the \textbf{universal class} $[\omega]$ that we now proceed to define (similar constructions were considered in~\cite[\S2]{Monod-ShalomCRAS}, in~\cite[7.11]{Monod-Shalom1} and in~\cite[\S9]{Duchesne-Monod_dendritesARX}).

\medskip

Let $T=(V, E)$ be a locally finite tree with Serre's conventions, which means in particular that an element of $E$ represents an \emph{oriented} edge and that $E$ is endowed with a fixed-point-free involution $e\mapsto\overline e$ reversing the orientation. We denote by $\path$ the set of \textbf{paths}, namely sequences $p=(e_1, \ldots, e_n)$ of successive edges $e_i\in E$ without backtracking. The \textbf{reverse} path is $\overline p = (\overline e_n, \ldots, \overline e_1)$ and $n$ is the \textbf{length} of $p$. Given two vertices $x,y$ we denote by $[x,y]$ the path connecting them. The \textbf{path module} $\pathmod$ is the $\Aut(T)$-module of all elements of the free vector space $\RR[\path]$ that change sign when replacing a path by its reverse. We define an $\Aut(T)$-equivariant  map $\teta\colon V^2\to\pathmod$ by setting
$$\teta(x_0, x_1)(p)= \pm 1$$
if $p$ (respectively $\ol p$) is contained as a sub-path in $[x_0, x_1]$, and~$0$ in all other cases. We define
$$\omega=d\teta\colon V^3\to\pathmod$$
as the coboundary of $\teta$. We recall here that $d$ will always be the usual alternating sum of the maps omitting the individual variables; we refer to the preliminaries below for explicit values of $\omega$.

\medskip

In order to view $\omega$ as a cocycle in bounded cohomology, we need to specify a norm on $\pathmod$; of course, $\teta$ should be unbounded for this norm since otherwise the class of $\omega$ would be trivial. The specific norm is however not too relevant; one property we want is that, when restricted to the free vector space on the set of paths of length $n$, it is equivalent to the $\ell^1$-norm $\|\cdot\|_{n,1}$. One explicit choice is the norm $\pathnorm\cdot = {\sum_{n\geq 1} \frac1{n!} \|\cdot\|_{n,1}}$ whose normalisation factor $1/n!$ is an arbitrary way to ensure uniform boundedness statements in the proofs.

Furthermore, we write $\pathban$ for the completion of $\pathmod$. Indeed, even though  our arguments will be explicit and finitary, the general tools of continuous bounded cohomology work best with \emph{Banach} spaces.

\medskip

A choice of free generators for the free group $F$ determines an embedding of $F$ into the automorphism group of the corresponding tree $T$. We view $\omega$ as a cocycle for the continuous bounded cohomology $\hbc^*$ of the locally compact group $\Aut(T)$.

Moreover, every path in $T$ is labelled by a reduced word in $F$. Thus, given a reduced word $w$, we can define an $F$-invariant bounded linear form $\lambda_w$ on $\pathmod$, hence also on $\pathban$, by specifying its values on individual paths as follows:
$$\lambda_w(p)=\left\{ \begin{array}{ll}
\phantom{-}1& \text{if $w$ labels $p$},\\
-1 & \text{if $w$ labels $\ol p$},\\
\phantom{-}0&\text{otherwise}. 
\end{array}\right.$$
This definition ensures that if $g\in F$ labels $[x_0, x_1]$, then
$$\lambda_w\circ\teta(x_0, x_1)= f_w(g).$$
Therefore, we deduce immediately the following relation between the universal class $[\omega]$ and individual quasimorphisms.

\begin{prop}\label{prop:mother}
Let $\beta_w\in\hb^2(F, \RR)$ be the bounded cohomology class associated to a Brooks quasimorphism on $F$ for the chosen generators. Then $\beta_w$ is the image of the class of $\omega$ under the map
$$\hbc^2\big(\Aut(T), \pathban\big) \xrightarrow{\ \mathrm{rest}\ } \hb^2(F, \pathban) \xrightarrow{\ (\lambda_w)_*\ } \hb^2(F, \RR),$$
where the first arrow is the restriction map and the second is induced by $\lambda_w$.\qed
\end{prop}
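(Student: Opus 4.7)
The plan is to unwind the definitions, anchored on the identity displayed just before the proposition:
$$\lambda_w\circ\teta(x_0,x_1)=f_w(g) \quad\text{whenever } g\in F \text{ labels } [x_0,x_1].$$
Since $\omega=d\teta$ and $\lambda_w$ is $F$-equivariant, we have $\lambda_w\circ\omega=d(\lambda_w\circ\teta)$ as cochains $V^3\to\RR$, so everything reduces to matching this with the standard 2-cocycle on $F^3$ that represents $\beta_w$.

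First I would fix a basepoint $x_0\in V$ adapted to the chosen free generators, so that the path $[g_ix_0, g_jx_0]$ is labelled by the reduced word representing $g_i\inv g_j$ for any $g_i,g_j\in F$. Next I would invoke the standard realisation of the restriction map $\hbc^2(\Aut(T),\pathban)\to\hb^2(F,\pathban)$ on the continuous bounded cochains defined on $V^{\bullet+1}$: it is implemented by pullback along the orbit map $g\mapsto gx_0$, with the coefficient module $\pathban$ common to both groups.

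Applying this pullback to $\lambda_w\circ\omega$ and evaluating on $(g_0,g_1,g_2)\in F^3$ then yields
$$f_w(g_1\inv g_2)-f_w(g_0\inv g_2)+f_w(g_0\inv g_1),$$
which is precisely the standard homogeneous bounded 2-cocycle $df_w$ whose class in $\hb^2(F,\RR)$ is, by definition, the Brooks class $\beta_w$. The only mildly delicate point is the identification of the restriction map with orbit-map pullback at the cocycle level; this is routine in the Banach-coefficient setting recalled in the preliminaries, so I expect the actual proof to consist of little more than flagging this fact and recording the three-term computation above.
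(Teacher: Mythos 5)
Your proposal is correct and matches the paper's reasoning exactly: the paper states this proposition with an immediate \qed, deducing it directly from the identity $\lambda_w\circ\teta(x_0,x_1)=f_w(g)$ and the fact that $\omega=d\teta$, which is precisely the chain of observations you spell out. Your explicit three-term computation and the identification of the restriction map with pullback along the orbit map are the (routine) details the authors chose to leave implicit.
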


The cup product of two elements of $\hbc^2(\Aut(T), \pathban)$ is a class in $\hbc^4$ with values in the tensor product module $\pathban\otimes\pathban$, which we can also (projectively) complete to $\pathban\potimes\pathban$ (see the preliminaries for the norm). The naturality of the cup product now implies:

\begin{cor}\label{cor:mother}
Given two reduced words $w$ and $w'$, we keep all the above notation.

Then $[\omega]\cp[\omega]$, viewed as a class with coefficients in $\pathban\potimes\pathban$, is mapped to $\beta_w\cp\beta_{w'}$
$$\hbc^4\big(\Aut(T), \pathban\potimes\pathban\big) \longrightarrow \hb^4(F, \RR)$$
under the restriction followed by $(\lambda_w\otimes\lambda_{w'})_*$.\qed
\end{cor}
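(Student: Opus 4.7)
The plan is to deduce the corollary purely from naturality properties of the cup product, together with Proposition~\ref{prop:mother}. Two general principles are at work: restriction from a group to a subgroup is a ring homomorphism for the cup product, and the cup product is compatible with the coefficient pairings that induce it. Concretely, for any locally compact group $G$, continuous equivariant bounded linear maps $L_1\colon M_1\to N_1$ and $L_2\colon M_2\to N_2$, and classes $\alpha_i\in \hbc^*(G, M_i)$, one has
$$(L_1\potimes L_2)_*(\alpha_1\cp\alpha_2) \;=\; (L_1)_*(\alpha_1)\cp (L_2)_*(\alpha_2)$$
in $\hbc^*(G, N_1\potimes N_2)$; this follows directly from the cocycle-level description of the cup product, where cochains are paired by the tensor product of their coefficient values.

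Applying restriction first, and using that restriction commutes with cup products, we obtain
$$\mathrm{rest}\bigl([\omega]\cp[\omega]\bigr) \;=\; \mathrm{rest}[\omega]\cp \mathrm{rest}[\omega] \quad\text{in } \hb^4\bigl(F, \pathban\potimes\pathban\bigr).$$
Applying the coefficient map $\lambda_w\otimes\lambda_{w'}\colon \pathban\potimes\pathban\to\RR$ (identifying $\RR\otimes\RR$ with $\RR$) and invoking the displayed naturality with $L_1=\lambda_w$, $L_2=\lambda_{w'}$ then gives
$$(\lambda_w\otimes\lambda_{w'})_*\bigl(\mathrm{rest}[\omega]\cp\mathrm{rest}[\omega]\bigr) \;=\; (\lambda_w)_*\mathrm{rest}[\omega]\;\cp\; (\lambda_{w'})_*\mathrm{rest}[\omega].$$
By Proposition~\ref{prop:mother}, the two factors on the right are $\beta_w$ and $\beta_{w'}$, yielding the claim.

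The only subtlety I foresee is technical rather than conceptual: one must know that the cup product and the above naturalities remain valid at the level of \emph{continuous} bounded cohomology with coefficients in the completed tensor product $\pathban\potimes\pathban$. This is the reason for the authors' insistence, in the paragraph preceding the statement, on passing to the Banach completion and referring to the preliminaries for the projective tensor norm; once this framework is set up, the argument above is purely formal.
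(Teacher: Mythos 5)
Your proposal is correct and follows exactly the route the paper intends: the corollary is stated with an immediate \qed precisely because it follows from the naturality of the cup product with respect to restriction and coefficient maps, combined with Proposition~\ref{prop:mother}. Your explicit spelling-out of the two naturality principles matches the paper's (implicit) argument.
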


In view of Corollary~\ref{cor:mother}, Theorem~\ref{thm:cupOnF} is now an immediate consequence of the following vanishing result for the square of the universal class $[\omega]$.

\begin{thm}\label{mainthm}
Let $T$ be a locally finite tree.

Then the class of $\omega\cp \omega$ vanishes in $\hbc^4\big(\Aut(T), \pathban\potimes\pathban\big)$.
\end{thm}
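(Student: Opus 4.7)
My approach is to exhibit an explicit \emph{bounded} primitive for the cocycle $\omega\cp\omega$. The starting observation is that $\omega=d\teta$ and $d\omega=0$ give, via Leibniz,
\[
\omega\cp\omega \;=\; d(\teta\cp\omega),
\]
so $\omega\cp\omega$ is already a coboundary at the level of cochains, but only of the unbounded cochain $\teta\cp\omega$ (the map $\teta$ is genuinely unbounded, otherwise even $[\omega]$ would vanish). The entire content of the theorem is therefore to upgrade this formal primitive to a bounded one.

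My first step would be to transfer the computation to the aligned chain complex of \cite{Bucher-Monod_tree_inpress}, which realises $\hbc^*\bigl(\Aut(T),\pathban\potimes\pathban\bigr)$ via cochains supported on tuples of vertices that lie on a common geodesic of $T$. On such aligned triples, $\omega$ takes a transparent form: for $(x_0,x_1,x_2)$ with $x_1\in[x_0,x_2]$ one directly computes
\[
\omega(x_0,x_1,x_2)\;=\;-\bigl(\teta(x_0,x_2)-\teta(x_0,x_1)-\teta(x_1,x_2)\bigr),
\]
which is supported exactly on the non-backtracking sub-paths of $[x_0,x_2]$ that \emph{cross} the middle vertex $x_1$. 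The number of such paths of length $n$ grows only polynomially in $n$, so the damping $1/n!$ in $\pathnorm{\cdot}$ makes $\omega$ uniformly bounded on aligned triples, even though $\teta$ itself is not.

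With this in hand, the central step is to construct a bounded 3-cochain $\alpha$ on aligned 4-tuples satisfying $d\alpha=\omega\cp\omega$ on aligned 5-tuples. The idea is to replace the unbounded $\teta$-factor in $\teta\cp\omega$ by a bounded ``crossing'' surrogate built, as above, from the alignment: schematically,
\[
\alpha(x_0,x_1,x_2,x_3)\;=\;\bigl(\text{bounded crossing cochain at a median of } x_0,x_1,x_2\bigr)\,\otimes\,\omega(x_1,x_2,x_3),
\]
or a symmetric variant. The identity $d\alpha=\omega\cp\omega$ is then a direct combinatorial verification using $d\omega=0$ on the right factor and telescoping of the crossing pieces across the five triples of the aligned 5-tuple; conveniently, in the aligned setting the median of every triple is just the middle vertex, which organises the bookkeeping.

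The chief obstacle I expect is precisely this combinatorial verification. On an aligned $(x_0,\ldots,x_4)$ the five sub-4-tuples that contribute to $d\alpha$ carry different medians, and one must check that the alternating sum of the corresponding crossing cochains collapses to exactly $\omega(x_0,x_1,x_2)\otimes\omega(x_2,x_3,x_4)$, while at the same time every surviving term remains bounded in the projective norm on $\pathban\potimes\pathban$. Once such an $\alpha$ is displayed, the aligned-chain realisation of $\hbc^*(\Aut(T),\cdot)$ from \cite{Bucher-Monod_tree_inpress} yields the vanishing of $[\omega\cp\omega]$ in $\hbc^4\bigl(\Aut(T),\pathban\potimes\pathban\bigr)$ at once.
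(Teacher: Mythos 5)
Your high-level strategy coincides with the paper's: pass to the aligned/coherent cochain complex of \cite{Bucher-Monod_tree_inpress} (where $\omega$ becomes the transparent ``crossing the middle vertex'' cochain, exactly as you describe) and exhibit an explicit bounded primitive for $\omega\cp\omega$ there. But the one step you defer --- pinning down the primitive and verifying $d\alpha=\omega\cp\omega$ --- is where the actual content lies, and your schematic candidate does not work. If you unwind $\alpha(x_0,x_1,x_2,x_3)=(\text{crossing cochain at }x_1)\otimes\omega(x_1,x_2,x_3)$, it is essentially $\omega(x_0,x_1,x_2)\otimes\omega(x_1,x_2,x_3)$, and computing $d\alpha$ on a coherent $5$-tuple using the cocycle identity for $\omega$ leaves, after the telescoping you invoke, a residual term of the shape
\[
\bigl(\omega(x_0,x_1,x_3)-\omega(x_0,x_1,x_2)\bigr)\otimes\bigl(\omega(x_1,x_2,x_4)-\omega(x_1,x_2,x_3)\bigr),
\]
which is supported on pairs $(p_1,p_2)$ with $p_1$ crossing $x_1$ and extending past $x_2$, and $p_2$ crossing $x_2$ and extending past $x_3$. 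Such pairs exist and the term does not vanish, so no purely decomposable ``crossing $\otimes$ $\omega$'' cochain is a primitive.

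The missing idea is a condition coupling the two tensor factors. The paper's primitive $B(x)(p_1,p_2)$ is supported on pairs where both $p_i$ are carried by $[x_0,x_3]$ with $x_i\in\mathrm{Int}(p_i)$ \emph{and the interiors of $p_1$ and $p_2$ are disjoint}; this disjointness is exactly what makes the offending overlapping configurations cancel in pairs in the alternating sum $\sum_i(-1)^iB(\ldots,\widehat{x_i},\ldots)$, and it is also what keeps $B$ uniformly bounded in the projective norm. So while your framing (Leibniz gives the unbounded primitive $\teta\cp\omega$; the theorem is the assertion that a bounded one exists; the coherent resolution is the right place to build it) is exactly right, the proposal as written stops short of the decisive combinatorial construction, and the candidate it sketches would fail the verification you correctly identify as the chief obstacle.
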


\noindent
The remainder of this note is devoted to the proof of Theorem~\ref{mainthm}.

\section{Preliminaries}%%%%%%%%%%%%%%%%%%%%%%%%%%%%%%%%%%%%%%%%%%%%%%%%%%%%%%%%%%%%%%%%%%%%%%%%%%%%%%%%%%%%%%%%%%%%%
%%%%%%%%%%%%%%%%%%%%%%%%%%%%%%%%%%%%%%%%%%%%%%%%%%%%%%%%%%%%%%%%%%%%%%%%%%%%%%%%%%%%%%%%%%%%%
The cup-product of bounded cocycles ranging in $\pathmod$ or $\pathban$ is trivially bounded for \emph{any} cross-norm on the tensor product with underlying norm $\pathnorm\cdot$ on each factor. For cross-norms, we refer to~\cite{Ryan_book}. We shall choose the \textbf{projective} cross-norm $\|\cdot\|_\pi$ and denote by $\pathban\potimes\pathban$ the corresponding completion. Since this is the largest cross-norm, the vanishing result of Theorem~\ref{mainthm} with respect to $\|\cdot\|_\pi$ implies the corresponding vanishing for any other cross-norm.

\medskip

We say that a path $p$ is \textbf{carried} by a path $q$, and write $p \car q$, if either $p$ or $\ol p$ is contained in $q$ as a sub-path. We attach a sign $\pm 1$ to these two cases, referred to as the \textbf{orientation} of $p$ relative to $q$. We define the \textbf{interior} $\mathrm{Int}(p)\se V$ of a path $p$ to consist of all the vertices of the path except its two extremities. 

Recall that any three vertices $x_0, x_1, x_2\in V$ determine a \textbf{center} $c\in V$ characterized as the unique common vertex of all $[x_i, x_j]$. Given a path $p$, the definition of $\omega$ now shows that $\omega(x)(p)=\pm 1$ when $p$ is carried by some $[x_i, x_j]$ and $c\in \mathrm{Int}(p)$, and that $\omega(x)(p)$ vanishes otherwise.

A path can contain at most $n-1$ sub-paths of length $n$ containing a given vertex in their interior. Therefore, considering all three configurations and two orientations, we can bound the norm of $\omega$ by
$$\pathnorm{\omega(x)} \leq 3\cdot 2\cdot \sum_{n\geq 1} \frac1{n!} (n-1) =6,$$
witnessing that $\omega$ is indeed uniformly bounded.

\medskip

Recall that a $(q+1)$-tuple $(x_0,\dots,x_q)\in V^{q+1}$ is \textbf{aligned} if the vertices $x_0,\dots,x_q$ are contained in some geodesic segment of $T$. This tuple is furthermore said to be \textbf{coherent} if these $q+1$ vertices are distinct and in increasing order for one of the two linear orders induced on $\{x_0,\dots,x_q\}$ by any such segment. We denote by $\coh{q+1} \se V^{q+1}$ the set of coherent aligned tuples.

Below, we shall be particularly interested in the above description of $\omega(x)$ specialized to coherent triples $x\in \coh3$. In that case, $\omega(x)(p)=\pm 1$ if $x_1\in \mathrm{Int}(p)$ and $p$ is carried by $[x_0,x_2]$, with the sign given by the orientation of $p$ relative to $[x_0, x_2]$, and vanishes in all other cases.
\begin{center}
\setlength{\unitlength}{0.7cm}
\thicklines
\begin{picture}(9,3)
\put(6.5,1) {$\Longrightarrow\kern2mm \omega=1.$}
\put(1,1){\line(1,0){4}}
\put(1,1){\circle*{0.2}}\put(0.9,0.3){$x_0$}
\put(3,1){\circle*{0.2}}\put(2.9,0.3){$x_1$}
\put(5,1){\circle*{0.2}}\put(4.9,0.3){$x_2$}
\put(2,1.5){\vector(1,0){1.9}}\put(2.9,1.8){$p$}
\end{picture}
\end{center}

\section{The coherent resolution}%%%%%%%%%%%%%%%%%%%%%%%%%%%%%%%%%%%%%%%%%%%%%%%%%%%%%%%%%%%%%%%%%%%%%%%%%%%%%%%%%%%%%%%%%%%%%
%%%%%%%%%%%%%%%%%%%%%%%%%%%%%%%%%%%%%%%%%%%%%%%%%%%%%%%%%%%%%%%%%%%%%%%%%%%%%%%%%%%%%%%%%%%%%
Let $E$ be any isometric Banach $\Aut(T)$-module and recall that $\hbc^q(\Aut(T), E)$ can be computed with the (non-augmented) complex $\ell^\infty(V^{q+1}, E)^{\Aut(T)}$ of $\Aut(T)$-equivariant elements of the resolution
\begin{equation}\label{eq:homog}
0 \longrightarrow E \longrightarrow\ell^\infty(V, E) \longrightarrow \ell^\infty(V^2,E)  \longrightarrow \ell^\infty(V^3,E)  \longrightarrow \cdots
\end{equation}
(see e.g.~\cite[4.5.2]{Monod}). There is a natural restriction map to the complex $\ell^\infty(\coh{q+1}, E)$ on coherent tuples, but we warn the reader that \emph{the latter is not a resolution} of $E$.

Recall that an element of $\ell^\infty(V^{q+1}, E)$ is called \textbf{alternating} if any permutation $\sigma$ of the variables corresponds to the multiplication by the signature $\mathrm{sign}(\sigma)$. We denote by $\tau_q$ the permutation of $\{0, \ldots, q\}$ that reverses the order and observe that its signature is $(-1)^{\lfloor \frac{q+1}{2}\rfloor}$, where ${\lfloor \cdot\rfloor}$ denotes the integer part. Consider the $\Aut(T)$-equivariant involution $\hat\tau_q$ of $\ell^\infty(\coh{q+1}, E)$ defined by $\hat\tau_q (\alpha) (x) = \mathrm{sign}(\tau_q )\alpha(x^{\tau_q})$. Being an involution, it induces an eigenspace decomposition
$$\ell^\infty(\coh{q+1}, E) = \ell^\infty_+(\coh{q+1}, E) \oplus \ell^\infty_-(\coh{q+1}, E)$$
which is preserved by $\Aut(T)$. Although  $\ell^\infty(\coh{q+1}, E)$ is not a resolution, we have:

\begin{prop}\label{prop:comp}
The sub-complex
\begin{equation}\label{eq:coh}
0 \longrightarrow E \longrightarrow \ell^\infty_+(\coh 1, E) \longrightarrow \ell^\infty_+(\coh 2, E) \longrightarrow \ell^\infty_+(\coh 3, E) \longrightarrow \cdots
\end{equation}
is a resolution. Moreover, the map
$$A_q\circ \mathrm{rest} \colon \ell^\infty(V^{q+1}, E) \longrightarrow  \ell^\infty_+(\coh{q+1}, E)$$
from~\eqref{eq:homog} to~\eqref{eq:coh} obtained by restriction followed by the projection $A_q=(\hat\tau_q+\mathrm{Id})/2$ yields an isomorphism between $\hbc^q(\Aut(T), E)$ and the cohomology of the complex
\begin{equation}\label{eq:coh:inv}
0  \longrightarrow \ell^\infty_+(\coh 1, E)^{\Aut(T)} \longrightarrow \ell^\infty_+(\coh 2, E)^{\Aut(T)} \longrightarrow \ell^\infty_+(\coh 3, E)^{\Aut(T)} \longrightarrow \cdots
\end{equation}
\end{prop}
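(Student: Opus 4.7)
\medskip

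\noindent\textbf{Proof plan.} The strategy is to exhibit~(\ref{eq:coh}) as a strong resolution of $E$ by relatively injective isometric Banach $\Aut(T)$-modules. Once this is established, the functorial principle for bounded cohomology (e.g.~\cite[Prop.~7.2.4]{Monod} and surrounding material) guarantees simultaneously that the invariants complex~(\ref{eq:coh:inv}) computes $\hbc^*(\Aut(T), E)$ and that the asserted isomorphism is induced by any augmentation-preserving chain map from~(\ref{eq:homog}) to~(\ref{eq:coh}), such as $A_q \circ \mathrm{rest}$.

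\medskip

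First I would verify that $A_q \circ \mathrm{rest}$ is a chain map. The face maps $\partial_i$ send coherent tuples to coherent tuples and $\Aut(T)$ preserves coherence (automorphisms permute the two linear orders on any geodesic), so restriction is an equivariant chain map into $\ell^\infty(\coh{q+1}, E)$. That $A_q = (\hat\tau_q + \mathrm{Id})/2$ commutes with $d$ reduces to the sign identity $\mathrm{sign}(\tau_q) = (-1)^{q+1}\,\mathrm{sign}(\tau_{q+1})$, which is immediate from $\mathrm{sign}(\tau_q) = (-1)^{\lfloor (q+1)/2 \rfloor}$; so $A_q$ lands in $\ell^\infty_+$ and the composition is a chain map. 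In degree zero one has $\hat\tau_0 = \mathrm{Id}$, so $\ell^\infty_+(\coh 1, E) = \ell^\infty(V, E)$ and the augmentation is the usual inclusion of constant functions.

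\medskip

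For relative injectivity, the $\Aut(T)$-invariance of $\coh{q+1} \subseteq V^{q+1}$ makes $\ell^\infty(\coh{q+1}, E)$ an equivariant direct summand of the standard module $\ell^\infty(V^{q+1}, E)$ via extension-by-zero and restriction. The latter is relatively injective since vertex stabilisers in $\Aut(T)$ are compact-open (for $T$ locally finite). The commuting isometric involution $\hat\tau_q$ then realises $\ell^\infty_+(\coh{q+1}, E)$ as a further direct summand via the projection $A_q$, and relative injectivity is inherited by direct summands.

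\medskip

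The main obstacle is proving that~(\ref{eq:coh}) augmented by $E$ is exact as a complex of Banach spaces; here I would construct a non-equivariant bounded contracting homotopy by a variant of the base-point trick. The naive assignment $h\alpha(x_0, \ldots, x_{q-1}) = \alpha(o, x_0, \ldots, x_{q-1})$ for a fixed vertex $o \in V$ fails because $(o, x_0, \ldots, x_{q-1})$ need not be coherent; the fix is to replace $o$ by its closest-point projection $\pi$ onto the geodesic through $x_0, \ldots, x_{q-1}$ and insert $\pi$ at the unique position that restores coherence, weighted by the appropriate sign, and then post-compose with $A_{q-1}$ to land in the $+$-eigenspace. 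The degenerate cases, where $\pi$ coincides with one of the $x_i$ or where geometric subcases collapse, need to be absorbed into lower-degree terms. Verifying $dh + hd = \Id$ reduces to a geometric case analysis over the relative position of $o$, $\pi$, and the $x_i$'s on the ambient geodesic; the argument generalises the direct verification in degree~$1$, where a coherent $1$-cocycle $\alpha$ satisfies $\alpha(x_0, x_1) = \alpha(o, x_1) - \alpha(o, x_0)$ by applying the coherent cocycle identity to the triple obtained by inserting the center of $\{o, x_0, x_1\}$ into the geodesic configuration.
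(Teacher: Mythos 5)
Your treatment of the chain-map property (the sign identity $\mathrm{sign}(\tau_q)\,\mathrm{sign}(\tau_{q+1})=(-1)^{q+1}$) and of relative injectivity (coherent tuples form an invariant subset, so $\ell^\infty(\coh{q+1},E)$ is an equivariant retract of the relatively injective module $\ell^\infty(V^{q+1},E)$, and $A_q$ cuts out a further retract) is correct and essentially matches the paper, which phrases the retract via the isomorphic module $\ellal(V^{q+1},E)$ of alternating functions on aligned tuples. The paper then disposes of the first statement, the exactness of~\eqref{eq:coh}, by citing Corollary~8 of~\cite{Bucher-Monod_tree_inpress}, where that is the main technical result; your proposal instead tries to prove it from scratch, and this is where there is a genuine gap.

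The contracting homotopy you sketch does not satisfy $dh+hd=\Id$. Writing $h\alpha(x_0,\dots,x_{q-1})=(-1)^{j}\alpha(x_0,\dots,\pi,\dots,x_{q-1})$ with $\pi$ the projection of $o$ onto $[x_0,x_{q-1}]$ inserted at position $j$, the terms of $hd\alpha(x_0,\dots,x_q)$ that omit an interior vertex $x_i$ with $0<i<q$ do cancel against the corresponding terms of $dh\alpha$, and the term retaining all the $x_i$ produces $\alpha(x_0,\dots,x_q)$; but the faces $i=0$ and $i=q$ shorten the supporting segment, so the projection of $o$ onto $[x_1,x_q]$ (resp.\ $[x_0,x_{q-1}]$) differs from $\pi$ whenever $\pi$ lies in the deleted end, and these endpoint terms do not cancel. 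This failure is systematic, not a ``degenerate case to be absorbed into lower-degree terms.'' Your degree-one verification masks the problem because it invokes the cocycle identity $d\alpha=0$ three times (through the center of $\{o,x_0,x_1\}$): that shows every coherent $1$-cocycle is a coboundary, which is weaker than a homotopy identity valid for arbitrary cochains and does not propagate to higher degrees. Repairing this requires the genuinely more involved argument of the companion paper (or at least a precise treatment of the endpoint faces); as written, the exactness of~\eqref{eq:coh} --- the heart of the proposition --- is not established.
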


\begin{proof}
Following~\cite{Bucher-Monod_tree_inpress}, we denote by $\ellal(V^{q+1}, E)$ the sub-space of alternating maps defined on aligned tuples. The restriction to coherent tuples thus induces an isomorphism
$$\ellal(V^{q+1}, E) \cong \ell^\infty_+(\coh{q+1}, E).$$
Therefore, the first statement is simply a reformulation of Corollary~8 of~\cite{Bucher-Monod_tree_inpress}. Moreover, as observed there, the modules $\ellal(V^{q+1}, E)$ are relatively injective in the sense of bounded cohomology because the $\Aut(T)$-action on the set of aligned tuples is proper, see~\cite[4.5.2]{Monod}. More precisely, $\ellal(V^{q+1}, E)$ is a direct factor of the larger space without the alternation condition, to which~\cite[4.5.2]{Monod} applies, and one concludes as in~\cite[7.4.5]{Monod} by an alternation map.

A direct computation using the relation $ \mathrm{sign}(\tau_q ) \cdot \mathrm{sign}(\tau_{q+1}) = (-1)^{q+1}$ shows that $\hat\tau_q$ is a chain map. In particular, $\hat\tau_q$ automatically preserves the decomposition $\ell^\infty_\pm(\coh{q+1}, E)$ and $A_q$ is a chain map as well. Now the second statement follows by general cohomological principles (see e.g.\ \S7.2 in~\cite{Monod}).
\end{proof}

\section{A primitive for the square of $\omega$ on coherent tuples}%%%%%%%%%%%%%%%%%%%%%%%%%%%%%%%%%%%%%%%%%%%%%%%%%%%%%%%%%%%%%%%%%%%%%%%%%%%%%%%%%%%%%%%%%%%%%
%%%%%%%%%%%%%%%%%%%%%%%%%%%%%%%%%%%%%%%%%%%%%%%%%%%%%%%%%%%%%%%%%%%%%%%%%%%%%%%%%%%%%%%%%%%%%
We define an $\Aut(T)$-equivariant map
$$B\colon \coh4 \longrightarrow \pathmod\otimes\pathmod$$
by setting, for any coherent $4$-tuple $x$ and any paths $p_1, p_2\in \path$,
$$B(x)(p_1,p_2)=\pm1$$
whenever all the following hold:
\begin{itemize}
\item  both $p_1$ and $p_2$ are carried by $[x_0,x_3]$,
\item the interior of $p_1$ and of $p_2$ are disjoint,
\item $x_i\in \mathrm{Int}(p_i)$ for each $i=1,2$.
\end{itemize}
In that case, the sign $\pm1$ is the product of the orientations of $p_1$ and of $p_2$ relative to $[x_0,x_3]$. All this is perhaps much more intuitive in a picture, drawn for two of the four orientation possibilities:
\begin{center}
\setlength{\unitlength}{0.7cm}
\thicklines
\begin{picture}(11,3)
\put(8.5,1) {$\Longrightarrow\kern2mm B=1.$}
\put(1,1){\line(1,0){6}}
\put(1,1){\circle*{0.2}}\put(0.9,0.3){$x_0$}
\put(3,1){\circle*{0.2}}\put(2.9,0.3){$x_1$}
\put(5,1){\circle*{0.2}}\put(4.9,0.3){$x_2$}
\put(7,1){\circle*{0.2}}\put(6.9,0.3){$x_3$}
%\put(9,1){\circle*{0.2}}\put(8.9,0.3){$x_4$}
%
\put(2,1.5){\vector(1,0){1.9}}\put(2.9,1.8){$p_1$}
\put(4.1,1.5){\vector(1,0){1.9}}\put(4.9,1.8){$p_2$}
\end{picture}
\end{center}
\begin{center}
\setlength{\unitlength}{0.7cm}
\thicklines
\begin{picture}(11,3)
\put(8.5,1) {$\Longrightarrow\kern2mm B=-1.$}
\put(1,1){\line(1,0){6}}
\put(1,1){\circle*{0.2}}\put(0.9,0.3){$x_0$}
\put(3,1){\circle*{0.2}}\put(2.9,0.3){$x_1$}
\put(5,1){\circle*{0.2}}\put(4.9,0.3){$x_2$}
\put(7,1){\circle*{0.2}}\put(6.9,0.3){$x_3$}
%\put(9,1){\circle*{0.2}}\put(8.9,0.3){$x_4$}
%
\put(2,1.5){\vector(1,0){1.9}}\put(2.9,1.8){$p_1$}
\put(6,1.5){\vector(-1,0){1.9}}\put(4.9,1.8){$p_2$}
\end{picture}
\end{center}
In all other cases, we set $B(x)(p_1,p_2)=0$.

\begin{prop}\label{prop:db}
We have $d B(x)=\omega\cp \omega(x)$ for every coherent $5$-tuple $x$.
\end{prop}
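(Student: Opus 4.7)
The plan is to verify the identity pointwise on every pair of paths $(p_1, p_2)$. Expanding $dB(x) = \sum_{i=0}^{4} (-1)^i B(\hat x_i)$ and $\omega \cp \omega(x)(p_1, p_2) = \omega(x_0, x_1, x_2)(p_1) \cdot \omega(x_2, x_3, x_4)(p_2)$, one sees that both sides take values in $\{-1, 0, 1\}$. Since each sub-tuple $\hat x_i$ spans a sub-segment of $[x_0, x_4]$, every nonzero $B(\hat x_i)$, as well as every nonzero summand of the cup product, forces both $p_1$ and $p_2$ to be carried by $[x_0, x_4]$; I would assume this from now on. Then the orientation of each $p_k$ relative to any sub-segment of $[x_0, x_4]$ equals its orientation $\epsilon_k$ relative to $[x_0, x_4]$ itself, so every nonzero term on either side contributes with the common sign $\epsilon_1 \epsilon_2$ and it suffices to prove the corresponding unsigned identity.

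I would next parametrize $[x_0, x_4]$ as a linear segment on which $x_0 < x_1 < x_2 < x_3 < x_4$ and represent each $p_k$ as a sub-interval $[a_k, b_k]$. If $\mathrm{Int}(p_1)$ and $\mathrm{Int}(p_2)$ intersect, then every $B(\hat x_i)$ vanishes by definition and so does $\omega \cp \omega$ (whose nonvanishing requires $b_1 \leq x_2 \leq a_2$). Otherwise, the configuration is either Case~A ($b_1 \leq a_2$) or Case~B ($b_2 \leq a_1$). In Case~B, a nonzero $B(\hat x_i)$ would need $x_j \in \mathrm{Int}(p_1)$ and $x_k \in \mathrm{Int}(p_2)$ for some indices $j < k$ (with $(j,k)$ one of $(2,3)$, $(1,3)$, $(1,2)$), but then $x_j > a_1 \geq b_2 > x_k$ contradicts $x_j < x_k$; the cup product vanishes for the same reason, so both sides are zero.

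In Case~A, I would partition $[x_0, x_4]$ into the five zones $[x_0, a_1]$, $(a_1, b_1)$, $[b_1, a_2]$, $(a_2, b_2)$, $[b_2, x_4]$, labelled $1$ through $5$, and record the zone triple $(i_1, i_2, i_3)$ of $(x_1, x_2, x_3)$; coherence makes this triple non-decreasing. A direct inspection shows that precisely the five configurations $(1,2,4)$, $(2,2,4)$, $(2,3,4)$, $(2,4,4)$, $(2,4,5)$ produce a nonzero term on either side. Taking into account the extra constraints $a_1 \geq x_1$ for $B(\hat x_0)$ and $b_2 \leq x_3$ for $B(\hat x_4)$, a short tabulation then yields pairwise cancellations at every configuration except $(2, 3, 4)$, where only $B(\hat x_2) = 1$ survives; and this is exactly the configuration for which $\omega \cp \omega = 1$, namely when $x_2$ lies in the zone between $p_1$ and $p_2$. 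The main obstacle is this bookkeeping, especially distinguishing inclusive from exclusive endpoint conditions for $\mathrm{Int}(p_k)$ and tracking the endpoint restrictions that make $B(\hat x_0)$ and $B(\hat x_4)$ behave differently from the three middle terms; once correctly set up, the enumeration is entirely routine.
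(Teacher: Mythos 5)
Your overall strategy is sound and your Case~A tabulation is internally correct, but there is a genuine gap in the trichotomy on which the whole argument rests. You claim that if $\mathrm{Int}(p_1)$ and $\mathrm{Int}(p_2)$ are disjoint, then either $b_1\le a_2$ (Case~A) or $b_2\le a_1$ (Case~B). This is false: because interiors exclude the extremities, two sub-paths of $[x_0,x_4]$ that overlap in \emph{exactly one edge} (say $b_1=a_2+1$) have disjoint interiors yet satisfy neither inequality. These configurations are not vacuous for $dB$. For instance, on the geodesic $0,1,\dots,10$ take $x=(0,2,5,8,10)$, $p_1=[1,6]$ and $p_2=[5,9]$, so that $\mathrm{Int}(p_1)=\{2,3,4,5\}$ and $\mathrm{Int}(p_2)=\{6,7,8\}$ are disjoint. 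Then $B(x_0,x_2,x_3,x_4)(p_1,p_2)=B(x_0,x_1,x_3,x_4)(p_1,p_2)=1$, since all defining conditions of $B$ (including disjointness of interiors) hold; two summands of $dB$ are nonzero and one must check that they cancel. They do, but your proof as written never reaches this configuration. (The same loophole also absorbs paths of length one, whose interior is empty, though there everything vanishes trivially.)

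The repair is local: define Case~A by the condition that every vertex of $\mathrm{Int}(p_1)$ precedes every vertex of $\mathrm{Int}(p_2)$, i.e.\ $b_1\le a_2+1$, and allow the third zone $[b_1,a_2]$ to be empty. The five zones still partition $[x_0,x_4]$, your tabulation goes through verbatim, and when zone~$3$ is empty the configuration $(2,3,4)$ simply cannot occur, consistently with the vanishing of $\omega\cp\omega$ there (its nonvanishing requires $b_1\le x_2\le a_2$). For comparison, the paper's proof avoids the issue entirely: it never separates the two paths by order, but instead cases on which of $x_1,x_2$ lies in $\mathrm{Int}(p_1)$ and which of $x_2,x_3$ lies in $\mathrm{Int}(p_2)$, using only disjointness of the interiors; this gives three short cases (one matching term and two cancelling pairs) where you have five zone-patterns. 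Once the boundary case is absorbed, your bookkeeping is an equally valid, if somewhat heavier, verification.
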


\begin{proof} Let $p_1, p_2\in \path$. By definition,
$$\omega\cp \omega(x)(p_1,p_2)=\omega(x_0,x_1,x_2)(p_1)\cdot \omega(x_2,x_3,x_4)(p_2).$$
Thus, $\omega\cp \omega(x)(p_1,p_2)\neq 0$ if and only if all the following hold:
\begin{equation}\left\{ \begin{array}{l} \label{condition1}
x_1\in \mathrm{Int}(p_1) \ \text{ and }\ p_1 \car [x_0,x_2],\\
x_3\in \mathrm{Int}(p_2)  \ \text{ and }\ p_2 \car [x_2,x_4].
\end{array} \right.\end{equation}
As for $d B$, we observe that $d B(x)(p_1,p_2)=0$ unless possibly
\begin{equation}\left\{ \begin{array}{l} \label{condition2}
p_1,p_2 \text{ have disjoint interior and are carried by } [x_0,x_4],\\
x_1 \mathrm{\ or \ } x_2 \in \mathrm{Int}(p_1),\\
x_2 \mathrm{\ or \ } x_3\in \mathrm{Int}(p_2).
\end{array} \right.\end{equation}
In the case when Conditions (\ref{condition2}) are not satisfied, Conditions (\ref{condition1}) are not either; therefore in that case $d B$ and $\omega\cp \omega$ agree since they both vanish.

\medskip
Suppose now that Conditions (\ref{condition2}) are satisfied. By symmetry, we can assume that the orientation of $p_1$ and $p_2$ are compatible with the orientation of $[x_0,x_4]$ (and hence of $[x_0,x_3]$ and of $[x_1,x_4]$). Since $p_1$ and $p_2$ have disjoint interior, $x_2$ is contained in at most one of $\mathrm{Int}(p_1)$ or $\mathrm{Int}(p_2)$; we can suppose that it is not contained in $\mathrm{Int}(p_1)$, the other case being completely analogous. We have now three cases:

\medskip
\textbf{First case:} $x_1\in \mathrm{Int}(p_1)$, $x_2\notin \mathrm{Int}(p_1)\cup \mathrm{Int}(p_2)$ and $x_3\in \mathrm{Int}(p_2)$.
\begin{center}
\setlength{\unitlength}{0.7cm}
\thicklines
\begin{picture}(11,3)% PICTURE 1
\put(1,1){\line(1,0){8}}
\put(1,1){\circle*{0.2}}\put(0.9,0.3){$x_0$}
\put(3,1){\circle*{0.2}}\put(2.9,0.3){$x_1$}
\put(5,1){\circle*{0.2}}\put(4.9,0.3){$x_2$}
\put(7,1){\circle*{0.2}}\put(6.9,0.3){$x_3$}
\put(9,1){\circle*{0.2}}\put(8.9,0.3){$x_4$}
\put(2,1.5){\vector(1,0){2}}\put(2.9,1.8){$p_1$}
\put(6,1.5){\vector(1,0){2}}\put(6.9,1.8){$p_2$}
\end{picture}
\end{center}
The value of $\omega\cp \omega(x)(p_1,p_2)$ is $+1$, while the only non-zero summand in
$$d B(x)(p_1,p_2)=\Sigma_{i=0}^4 (-1)^i  B(\dots,\widehat{x_i},\dots )$$
is the one for $i=2$, which is indeed also $+1$.

\medskip
\textbf{Second case:} $x_1\in \mathrm{Int}(p_1)$ and $x_2, x_3\in \mathrm{Int}(p_2)$.
\begin{center}
\setlength{\unitlength}{0.7cm}
\thicklines
\begin{picture}(11,3)% PICTURE 2
\put(1,1){\line(1,0){8}}
\put(1,1){\circle*{0.2}}\put(0.9,0.3){$x_0$}
\put(3,1){\circle*{0.2}}\put(2.9,0.3){$x_1$}
\put(5,1){\circle*{0.2}}\put(4.9,0.3){$x_2$}
\put(7,1){\circle*{0.2}}\put(6.9,0.3){$x_3$}
\put(9,1){\circle*{0.2}}\put(8.9,0.3){$x_4$}
\put(2,1.5){\vector(1,0){1.8}}\put(2.9,1.8){$p_1$}
\put(4.2,1.5){\vector(1,0){3.8}}\put(5.9,1.8){$p_2$}
\end{picture}
\end{center}
Condition (\ref{condition1}) is not satisfied and hence $\omega\cp \omega$ vanishes. As for $d B$, only the summands for $i=2$ and $i=3$ are non-zero and cancel out to give $d B(x)(p_1,p_2)=0$.

\medskip
\textbf{Third case:} $x_1\in \mathrm{Int}(p_1)$, $x_2\in \mathrm{Int}(p_2)$ and $x_3\notin \mathrm{Int}(p_2)$.
\begin{center}
\setlength{\unitlength}{0.7cm}
\thicklines
\begin{picture}(11,3)% PICTURE 3
\put(1,1){\line(1,0){8}}
\put(1,1){\circle*{0.2}}\put(0.9,0.3){$x_0$}
\put(3,1){\circle*{0.2}}\put(2.9,0.3){$x_1$}
\put(5,1){\circle*{0.2}}\put(4.9,0.3){$x_2$}
\put(7,1){\circle*{0.2}}\put(6.9,0.3){$x_3$}
\put(9,1){\circle*{0.2}}\put(8.9,0.3){$x_4$}
\put(2,1.5){\vector(1,0){1.8}}\put(2.9,1.8){$p_1$}
\put(4.2,1.5){\vector(1,0){1.8}}\put(4.9,1.8){$p_2$}
\end{picture}
\end{center}
Again, condition (\ref{condition2}) is not satisfied and $\omega\cp \omega$ vanishes. As for $d B$, only the summands for $i=3$ and $i=4$ are non-zero and cancel out to give $d B(x)(p_1,p_2)=0$.
\end{proof}

\section{Proof of Theorem~\ref{mainthm}}%%%%%%%%%%%%%%%%%%%%%%%%%%%%%%%%%%%%%%%%%%%%%%%%%%%%%%%%%%%%%%%%%%%%%%%%%%%%%%%%%%%%%%%%%%%%%
%%%%%%%%%%%%%%%%%%%%%%%%%%%%%%%%%%%%%%%%%%%%%%%%%%%%%%%%%%%%%%%%%%%%%%%%%%%%%%%%%%%%%%%%%%%%%
We first verify that the primitive $B$ is bounded.

\begin{lem}\label{lem:B:bnd}
The map $B$ is uniformly bounded on $\coh4$ with respect to the projective norm $\|\cdot\|_\pi$.
\end{lem}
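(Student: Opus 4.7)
The plan is a direct counting estimate, mirroring the uniform bound on $\omega$ established in the preliminaries: for each fixed $x$, the support of $B(x)$ is confined to sub-paths of the single geodesic $[x_0,x_3]$, and the factorial weights in $\pathnorm{\cdot}$ absorb the linear-in-length count of such sub-paths.

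First, I would exhibit a representation of $B(x)$ in $\pathmod\otimes\pathmod$ whose projective norm is controllable. Since $B(x)$ is alternating in each of its two path arguments (reversing $p_i$ flips its orientation relative to $[x_0,x_3]$), grouping by reversal-orbits in each factor yields
\[
B(x) \;=\; \sum c(p_1,p_2)\,(\delta_{p_1}-\delta_{\overline{p_1}})\otimes(\delta_{p_2}-\delta_{\overline{p_2}}),
\]
the outer sum running over a choice of representatives of reversal-orbits of admissible pairs, with $c(p_1,p_2)=B(x)(p_1,p_2)\in\{\pm 1\}$. Each factor $\delta_p-\delta_{\overline p}\in\pathmod$ has path-norm $2/n!$ when $p$ has length $n$, so sub-multiplicativity of the projective cross-norm gives
\[
\|B(x)\|_\pi \;\leq\; \sum\frac{4}{n_1!\,n_2!},
\]
summed over the same representatives.

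Second, I would count these representatives at fixed lengths $(n_1,n_2)$. A sub-path of length $n_i$ of the fixed geodesic $[x_0,x_3]$ containing the interior vertex $x_i$ can occupy at most $n_i-1$ positions along the geodesic, hence contributes at most $n_i-1$ reversal-orbits. The count of admissible orbit pairs at lengths $(n_1,n_2)$ is thus at most $(n_1-1)(n_2-1)$ (the disjoint-interior condition only sharpens this).

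Combining,
\[
\|B(x)\|_\pi \;\leq\; 4\sum_{n_1,n_2\geq 1}\frac{(n_1-1)(n_2-1)}{n_1!\,n_2!} \;=\; 4\Bigl(\sum_{n\geq 1}\frac{n-1}{n!}\Bigr)^2 \;=\; 4,
\]
using the telescoping identity $\sum_{n\geq 1}(n-1)/n!=\sum_{m\geq 0}1/m!-\sum_{n\geq 1}1/n!=1$. The bound is uniform in $x$, as required. I do not foresee a serious obstacle: this is the same counting mechanism that bounded $\omega$, applied independently on each tensor factor, and the normalisation $1/n!$ in $\pathnorm{\cdot}$ is tailored precisely to render the resulting double series convergent.
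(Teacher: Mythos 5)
Your proof is correct and follows essentially the same route as the paper: bound $\|B(x)\|_\pi$ by exhibiting an explicit elementary-tensor representation, count the nonvanishing terms at fixed lengths $(n_1,n_2)$ via the at-most-$(n_i-1)$-positions observation, and let the $1/n!$ weights sum the double series. The grouping by reversal-orbits is a cosmetic repackaging of the paper's direct sum over pairs, and the resulting constant ($4$ versus the paper's $2$) is immaterial to the uniform boundedness claim.
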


\begin{proof}
Fix $x\in \coh4$ and consider abusively any path $p_i$ as an element of $\RR[P]$. By definition of the projective cross-norm, we can bound $\|B(x) \|_\pi$ by $\sum (\pathnorm{p_1} \cdot \pathnorm{p_2} )$, where the sum runs over all pairs $(p_1, p_2)$ on which $B(x)$ does not vanish. Arguing as in our estimate for the norm of $\omega$, we have at most $2(n_1-1)(n_2-1)$ such pairs whenever we fix the length $n_i$ of each $p_i$.
Since on the other hand we have $\pathnorm{p_i}=1/n_i!$, we conclude that $B(x)$ has norm at most
\[\sum_{n_1, n_2} \frac{2(n_1-1)(n_2-1)}{n_1! n_2!} = 2 \left(\sum_n \frac{n-1}{n!}\right)^2 =2.\]
\end{proof}

At this point, we conclude that $A_3 (B)$ belongs to $\ell^\infty_+(\coh4, \pathban)$. Since $A_*$ is a chain map (as pointed out in the proof of Proposition~\ref{prop:comp}), we deduce from Proposition~\ref{prop:db} that we have $A_4 (\omega\cp\omega) = d A_3 (B)$. Now Proposition~\ref{prop:comp} implies that the class of $\omega\cp\omega$ vanishes, completing the proof of Theorem~\ref{mainthm}.\qed

%In other words, $A_4 (\omega\cp\omega)$ is a coboundary in the complex~\eqref{eq:coh}. 

\medskip
\noindent
\textbf{Acknowledgements.}
The authors are grateful to Tobias Hartnick for his comments.

%\vspace{-2pt}

%%%%%%%%%%%%%%%%%%%%%%%%%%%%%%%%%%%%%%%%%%%%%%%%%%%%%%%%%%%%%%%%%%%%%%%%%%%%%%%%%%%

%\bibliographystyle{../BIB/amsalpha}
\bibliographystyle{../BIB/amsplain}
\bibliography{../BIB/ma_bib}

\end{document}